\title{A toy Neumann analogue of the nodal line conjecture}
\author{J. B.~Kennedy}
\dedicatory{\upshape
Group of Mathematical Physics, University of Lisbon\\
Campo Grande, Edif\'icio C6, P-1749-016 Lisboa, Portugal\\[.5em]
\texttt{jbkennedy@fc.ul.pt}
}
\newtheorem{theorem}{Theorem}[section]
\newtheorem{principle}[theorem]{Principle}
\newtheorem{problem}[theorem]{Problem}
\newtheorem{assumption}[theorem]{Assumption}
\theoremstyle{remark}
\newtheorem{remark}[theorem]{Remark}
\newtheorem{example}[theorem]{Example}
\numberwithin{equation}{section}
\numberwithin{figure}{section}
\newcommand{\R}{\mathbb{R}}
\begin{document}

\begin{abstract}
We introduce an analogue of Payne's nodal line conjecture, which asserts that the nodal (zero) set of any eigenfunction associated with the second eigenvalue of the Dirichlet Laplacian on a bounded planar domain should reach the boundary of the domain. The assertion here is that any eigenfunction associated with the first nontrivial eigenvalue of the Neumann Laplacian on a domain $\Omega$ with rotational symmetry of order two (i.e., $x\in\Omega$ iff $-x\in\Omega$) ``should normally'' be rotationally antisymmetric. We give both positive and negative results which highlight the heuristic similarity of this assertion to the nodal line conjecture, while demonstrating that the extra structure of the problem makes it easier to obtain stronger statements: it is true for all simply connected planar domains, while there is a counterexample domain homeomorphic to a disk with two holes.
\end{abstract}

\thanks{\emph{Mathematics Subject Classification} (2010). 35P05 (35B05, 35J05, 58J50)}

\thanks{\emph{Key words and phrases}. Laplacian, eigenfunction, nodal domain, Neumann boundary condition}

\thanks{The work of the author was supported by the Funda{\c{c}}{\~a}o para a Ci{\^e}ncia e a Tecnologia, Portugal, via the program ``Investigador FCT'', reference IF/01461/2015, and project PTDC/MAT-CAL/4334/2014.}


\maketitle

\section{Introduction}
\label{sec:intro}

Let $\Omega \subset \R^d$, $d\geq 2$, be a bounded domain (connected, open set) with sufficiently regular boundary $\partial\Omega$ and denote by
\begin{displaymath}
	0 = \mu_1 (\Omega) < \mu_2 (\Omega) \leq \mu_3 (\Omega) \leq \ldots
\end{displaymath}
the eigenvalues of the Laplacian with Neumann boundary conditions
\begin{displaymath}
\begin{aligned}
	-\Delta \psi &= \mu \psi \qquad &&\text{in } \Omega,\\
	\frac{\partial \psi}{\partial \nu} &=0 &&\text{on }\partial\Omega,
\end{aligned}
\end{displaymath}
where $\nu$ is the outward-pointing unit normal to $\partial\Omega$. We will also write $\psi_k \in H^1 (\Omega)$ for any (real-valued) eigenfunction associated with $\mu_k = \mu_k (\Omega)$, always understood in the weak, i.e.~variational, sense.

Recall that by a classical theorem of Courant, for any $k \geq 1$ the zero, or nodal, set of $\psi_k$, i.e.\ $\overline{\{ x \in \Omega : \psi_k (x) = 0\}}$, divides $\Omega$ into at most $k$ connected components, called the nodal domains of $\psi_k$. In particular, for $k=2$, the two nonempty and connected sets
\begin{displaymath}
	\Omega^+ := \{ x \in \Omega : \psi_2 (x) > 0 \}, \qquad \Omega^- := \{ x \in \Omega : \psi_2 (x) < 0\}
\end{displaymath}
are the two nodal domains of $\Omega$. The same is true of the eigenvalues and eigenfunctions of the corresponding Laplacian with Dirichlet boundary conditions, $\psi=0$ on $\partial\Omega$, which we shall denote by $\lambda_k=\lambda_k(\Omega)$ and $\varphi_k \in H^1_0 (\Omega)$, $k\geq 1$, respectively.

Our starting point is the \emph{nodal line conjecture} formulated and popularised by Payne \cite[Conjecture~5, p.~467]{payne:67}, \cite{payne:73}, now more than 50 years old, which postulated that the nodal set of $\varphi_2$ must touch $\partial\Omega$: no nodal domain should be entirely contained in $\Omega$. As Payne notes in \cite{payne:67}, in the Neumann case this is true; one expects it in the Dirichlet case by way of analogy and the principle that the nodal domains represent a $2$-partition of $\Omega$ minimising the spectral energy, namely
\begin{displaymath}
	\lambda_2 (\Omega) = \inf \Big\{ \max \{\lambda_1 (\Omega_1),\lambda_1(\Omega_2)\}: \Omega_1,\Omega_2\subset 
	\Omega \text{ open, } \Omega_1 \cap \Omega_2 = \emptyset,\,\overline{\Omega_1\cup\Omega_2}=\overline\Omega 
	\Big\},
\end{displaymath}
with equality exactly when $\Omega_1$ and $\Omega_2$ are nodal domains of some $\varphi_2$ (there is a corresponding statement for $\mu_2$ and $\psi_2$). This problem is thus closely related to, and partly of interest because it provides a link to, the ``distribution'' of the nodal domains cum optimal $2$-partition: it should be suboptimal from the point of view of energy minimisation to have one nodal domain concentrated somewhere in the ``middle'' of $\Omega$, with the other occupying its ``periphery''.

In addition to being true for convex planar domains \cite{alessandrini:94,melas:92}, the nodal line conjecture is also known to hold on some classes of symmetric domains, and some long, thin ones (we refer to \cite{grebenkov:13,kennedy:13} for more, and more precise, references). On the other hand, there are known counterexamples \cite{fournais:01,hoffmann:97}, which may be chosen simply connected in dimension three or above \cite{kennedy:13} but which seem intrinsically (topologically and geometrically) complicated in the plane. Thus the conjecture is not universally valid; it depends on the influence of the geometry of the domain on properties of the eigenfunctions.

Still not all that much is known; in particular, there seems to be a paucity of general positive results: is the conjecture true for simply connected planar domains? For convex domains in higher dimensions? We know of essentially only one ``type'' of counterexample: can one formulate a general principle by which a domain should fail to satisfy the conjecture? Yet in recent years there has been little progress, possibly owing to our lack of tools for connecting the eigenfunctions to the geometry of the domain; and attention has shifted to the more fertile problem of spectral minimal partitions (see, e.g., \cite{helffer:10,helffer:15} for surveys of the latter and \cite[Sec.~5]{grebenkov:13} for a summary of more properties of and techniques related to nodal lines in general).

The purpose of this note is to remark on a simple problem for the Neumann Laplacian which seems to bear considerable similarity to the nodal line problem, but which appears to be more structured and thus far easier and more tractable to handle---a kind of ``toy'' problem of a similar flavour to the nodal line conjecture. As with the latter, it asks how the two nodal domains of $\psi_2$ are distributed throughout $\Omega$. To introduce the problem, we first need to restrict to a class of symmetric domains: for the rest of the paper, unless otherwise stated, we shall make the following assumption.

\begin{assumption}
\label{assumption}
The planar bounded, Lipschitz domain $\Omega \subset \R^2$ has $\mathbb{Z}_2$ rotational symmetry,\footnote{Much of what we shall do is immediately generalisable to higher dimensions, but restricting ourselves to the planar case will keep the exposition simple and improve a number of statements; this case is also the one of principal interest for the original nodal line problem. The symmetry, however, seems essential to the formulation of the problem.} that is, $x \in \Omega$ if and only if $-x \in \Omega$.
\end{assumption}

An immediate consequence of this is that the eigenfunctions $\psi_k$ may be chosen to reflect this symmetry: we may assume for each $k \geq 1$ that $\psi_k$ is either \emph{symmetric} (or \emph{even}), i.e., $\psi_k (x) = \psi_k (-x)$ for all $x \in \overline{\Omega}$, or \emph{antisymmetric} (or \emph{odd}), $\psi_k (x) = -\psi_k (-x)$. (In the case of a simple eigenvalue, the eigenfunction must be one or the other. In the case of higher multiplicity, we may choose a basis of eigenfunctions each of which is either even or odd.) Since $\psi_1$ is always constant, it is trivially symmetric.

For $\psi_2$, the natural thing to \emph{expect} 
is:

\begin{principle}
\label{principle}
Any eigenfunction associated with $\mu_2 (\Omega)$ should be antisymmetric.
\end{principle}

The heuristic argumentation behind this is that, as with the principle behind the nodal line conjecture, the two nodal domains of any antisymmetric eigenfunction $\psi_2$ should divide $\Omega$ into two equal pieces, concentrated in opposite halves of $\Omega$, while if $\psi_2$ is symmetric, one nodal domain will be concentrated in the centre of $\Omega$, and the other at its periphery. We will give a few basic explicit examples of this, such as disks and rectangles, in Section~\ref{sec:examples}. We also note that reflection symmetry does not seem to yield a ``good'' problem; see Remark~\ref{rem:symmetry}, and there does not seem to be a natural generalisation of Principle~\ref{principle} to less symmetric domains. To the best of our knowledge, there has also not yet been a systematic investigation of the impact of symmetry of $\Omega$ in general on the nodal structure of $\psi_2$ or $\varphi_2$.

Let us now give a couple of results which demonstrate in a more formal way how this symmetric-antisymmetric principle mirrors the nodal line conjecture, and that it is easier to obtain stronger statements (both positive and negative) for it. It is thus to be hoped that studying this toy problem may, at the very least, yield insight into the nodal line conjecture and/or the distribution of the nodal domains $\Omega^\pm$ in $\Omega$.

We recall that nodal line conjecture is known to be true for convex planar domains, as cited above, and conjectured to be true for simply connected planar domains \cite{freitas:08,kennedy:13}, while the number of holes the counterexample planar domains need to have is completely unknown; there is not even a clear upper bound \cite{fournais:01}.

\begin{theorem}
\label{thm:simply-connected}
Suppose $\Omega$, in addition to satisfying Assumption~\ref{assumption}, is simply connected. Then any eigenfunction associated with $\mu_2 (\Omega)$ is antisymmetric.
\end{theorem}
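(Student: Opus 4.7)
My approach is by contradiction: suppose there exists a symmetric eigenfunction $\psi_2$ associated with $\mu_2(\Omega)$ and derive a topological impossibility; this will show that the eigenspace of $\mu_2$ has no nonzero symmetric element and hence consists entirely of antisymmetric functions. Set $\Omega^+ = \{\psi_2 > 0\}$ and $\Omega^- = \{\psi_2 < 0\}$; by the assumed symmetry $\psi_2(x) = \psi_2(-x)$, each of $\Omega^\pm$ is invariant under $x \mapsto -x$. Both are open and connected (as the two nodal domains of $\psi_2$), and crucially each must touch $\partial\Omega$; this is the familiar fact that Neumann nodal domains always reach the boundary, as already noted in the introduction, and follows by combining the Friedlander inequality $\mu_2(\Omega)\le\lambda_1(\Omega)$ with the strict Dirichlet domain monotonicity $\lambda_1(\Omega) < \lambda_1(\omega)$ for $\omega \subsubset \Omega$ (otherwise $\psi_2$ would restrict to a first Dirichlet eigenfunction on a compactly contained nodal domain). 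Since $\Omega^+\cap\Omega^-=\emptyset$, the origin belongs to at most one of them, and without loss of generality $0\notin\Omega^-$.

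The key construction is a simple closed curve $\Gamma\subset\Omega^-$ that encircles $0$. Pick any $z\in\Omega^-$; by symmetry $-z\in\Omega^-$, and by path-connectedness there is a simple path $\gamma\subset\Omega^-$ from $z$ to $-z$. Concatenating $\gamma$ with its antipodal image $-\gamma$, and if necessary applying standard surgery to remove interior self-intersections, one obtains a Jordan curve $\Gamma\subset\Omega^-$. Any continuous branch of $\arg$ along $\gamma$ changes by an odd multiple of $\pi$, and the same change occurs along $-\gamma$, so $\Gamma$ has odd winding number around $0$; in particular $0\in\interior(\Gamma)$. Simple connectedness of $\Omega$ together with $\Gamma\subset\Omega$ then forces the enclosed region to satisfy $\overline{\interior(\Gamma)}\subsubset\Omega$.

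The endgame: $\Omega^+$ is open, connected, and disjoint from $\Gamma$, so lies entirely in $\interior(\Gamma)$ or in the unbounded component of $\R^2\setminus\Gamma$. In the first case, $\overline{\Omega^+}\subset\overline{\interior(\Gamma)}\subsubset\Omega$, contradicting that $\overline{\Omega^+}$ touches $\partial\Omega$. In the second case, $0\notin\Omega^+$, so $\psi_2(0)=0$ and $\Omega^+$ is itself a symmetric connected open set not containing the origin; applying the same construction inside $\Omega^+$ yields a Jordan curve $\Delta\subset\Omega^+$ winding oddly around $0$. The two disjoint Jordan curves $\Gamma$ and $\Delta$ both enclose $0$, so they must be nested; since $\Delta$ lies outside $\Gamma$, necessarily $\Gamma\subset\interior(\Delta)$. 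Then $\Omega^-$ is connected, disjoint from $\Delta$, and meets $\interior(\Delta)$ through $\Gamma$, so $\Omega^-\subset\interior(\Delta)\subsubset\Omega$, the same contradiction.

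The main obstacle I anticipate is the Jordan-curve step: arranging that, after the symmetric concatenation, one genuinely extracts a simple closed curve inside $\Omega^\pm$ still winding oddly around the origin. This is a purely two-dimensional topological issue which I expect to resolve by standard path-surgery, but it must be handled with some care. Everything else is essentially plane-topological bookkeeping made possible by the Jordan curve theorem, the symmetry of $\Omega^\pm$, and the simple connectedness of $\Omega$.
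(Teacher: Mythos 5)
Your argument is correct, but it is genuinely different from the paper's. The paper also argues by contradiction from the existence of a symmetric $\psi_2$, but it works on the \emph{boundary}: since $\partial\Omega$ is connected, either $\psi_2$ has one sign on $\partial\Omega$ (so one nodal domain is interior, killed by the Courant/P\'olya--Szeg\H{o} argument $\mu_2(\Omega)=\lambda_1(\Omega^-)>\lambda_1(\Omega)>\mu_2(\Omega)$), or the symmetric trace changes sign at least four times around $\partial\Omega$, and a Jordan-curve argument then forces at least three nodal domains, contradicting Courant. You instead work entirely in the \emph{interior}: using the $\mathbb{Z}_2$-invariance of each nodal domain you build a Jordan curve inside one of them winding oddly around the origin, and simple connectedness traps one nodal domain in the compactly contained interior of such a curve, reducing to the same no-interior-nodal-domain contradiction. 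Both proofs rest on the same two pillars (Courant plus the impossibility of an interior Neumann nodal domain, and planar Jordan-curve topology plus the symmetry), and both are short. What your route buys is that it never needs to make sense of $\psi_2$ on $\partial\Omega$ --- the paper has to insert a parenthetical remark about traces on Lipschitz boundaries, and implicitly uses that $z,-z$ with $\psi_2(z)>0$ exist on $\partial\Omega$ --- and it isolates the symmetry input cleanly as ``a symmetric connected open set avoiding $0$ carries a loop of odd winding number about $0$.'' What it costs is the path-surgery lemma: your phrasing (``one obtains a Jordan curve\ldots so $\Gamma$ has odd winding number'') should be sharpened to the correct statement that a (say polygonal, after a homotopy within the open set) loop of nonzero winding number about $0$ decomposes into finitely many simple subloops whose winding numbers about $0$ sum to the total, so at least one simple subloop has $0$ in its interior; that version is standard and suffices for everything that follows. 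With that point made precise, your proof is complete and is a legitimate alternative to the one in the paper.
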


\begin{theorem}
\label{thm:counterexample}
There exists a domain $\Omega$ satisfying Assumption~\ref{assumption}, which is homeomorphic to a disk with two holes, on which any eigenfunction associated with $\mu_2 (\Omega)$ is symmetric.
\end{theorem}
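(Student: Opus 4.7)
As observed in the exposition before Theorem~\ref{thm:simply-connected}, the $\mathbb{Z}_2$ symmetry decomposes the Neumann spectrum of $\Omega$ into symmetric and antisymmetric parts, with ordered eigenvalues $\mu_k^s$ and $\mu_k^a$. Since the constants span the symmetric kernel, $\mu_1^s = 0$, and hence $\mu_2(\Omega) = \min\{\mu_2^s(\Omega),\mu_1^a(\Omega)\}$; the theorem therefore reduces to exhibiting a single $\mathbb{Z}_2$-symmetric planar $\Omega$, homeomorphic to a disk with two holes, for which the strict inequality $\mu_2^s(\Omega) < \mu_1^a(\Omega)$ holds.

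My plan is to construct $\Omega$ as a thin tubular neighborhood of a carefully chosen metric graph. I take $G$ to be a ``theta graph'': two vertices $v_\pm = \pm p$ joined by three edges $e_1, e_2, e_3$, embedded in the plane with $e_3$ the straight segment between $v_\pm$ through the origin (invariant under $x \mapsto -x$) and with $e_2 = -e_1$ (swapped by the symmetry), with lengths $\ell_1 = \ell_2 =: \ell$ and $\ell_3 =: L$ free parameters. The first Betti number of $G$ equals $2$, so any tubular neighborhood of $G$ is homeomorphic to a disk with two holes. On $G$ with standard Kirchhoff vertex conditions the eigenvalue problem can be solved essentially explicitly: decomposing by the $\mathbb{Z}_2$-action reduces it to two transcendental conditions in $\sqrt{\mu}$, one coming from functions that are even about the midpoint of $e_3$ (Neumann node there) and one from those that are odd (Dirichlet node there). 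My claim is that for a suitable ratio $L/\ell$ the first nontrivial eigenvalue of the former family lies strictly below that of the latter, i.e.\ $\mu_2^s(G) < \mu_1^a(G)$.

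To transfer this to a smooth domain, I let $\Omega_\delta$ denote the $\delta$-tubular neighborhood of $G$, smoothed at the two vertices. By the now-standard convergence of Neumann Laplacian spectra on graph-like thin domains to Kirchhoff spectra of the underlying graph (see, e.g., work of Kuchment, Post and collaborators), one has $\mu_k(\Omega_\delta) \to \mu_k(G)$ as $\delta \to 0$ for each fixed $k$; since $\Omega_\delta$ is itself $\mathbb{Z}_2$-symmetric, the convergence respects the symmetric/antisymmetric decomposition, so that $\mu_k^{s,a}(\Omega_\delta) \to \mu_k^{s,a}(G)$ separately. For $\delta$ small enough the strict inequality on $G$ therefore lifts to $\mu_2^s(\Omega_\delta) < \mu_1^a(\Omega_\delta)$, so that every eigenfunction associated with $\mu_2(\Omega_\delta)$ is symmetric and $\Omega := \Omega_\delta$ is the required domain.

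The main obstacle is the explicit verification of $\mu_2^s(G) < \mu_1^a(G)$: for the theta graph the symmetric and antisymmetric secular equations are of comparable complexity, and one has to play the two parameters $\ell, L$ off against each other to drive a symmetric mode below all antisymmetric ones. Should this direct computation prove unwieldy, a backup is to construct $\Omega$ directly in the plane, as a sufficiently ``bulky'' region with two small holes placed symmetrically on the axis along which any natural antisymmetric test function would vary; the upper bound on $\mu_2^s(\Omega)$ then follows from an explicit symmetric test function, while the lower bound on $\mu_1^a(\Omega)$ is obtained by slicing $\Omega$ along a symmetric curve through the origin, noting that any antisymmetric $H^1$-function has vanishing trace on the cut, and applying a mixed Dirichlet--Neumann Poincar\'e inequality on one of the two resulting halves. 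In either route, it is this lower bound on the antisymmetric eigenvalue, rather than the upper bound on the symmetric one, that is the delicate step.
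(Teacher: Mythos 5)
Your opening reduction to $\mu_2^s(\Omega) < \mu_1^a(\Omega)$ is correct, and the overall strategy (drive a symmetric mode below all antisymmetric ones) is the right one, but both of your concrete routes contain genuine gaps. The main route fails because the central claim $\mu_2^s(G) < \mu_1^a(G)$ is \emph{false} for the theta graph as you specify it. Taking $e_3$ to be the straight segment through the origin forces $L \le \ell$, and then the loop $e_1 \cup e_2$ (of length $2\ell$) carries the explicit eigenfunction equal to $\sin(\pi t/\ell)$ on $e_1$, $-\sin(\pi t/\ell)$ on $e_2$ (both parametrised from $v_+$) and $0$ on $e_3$: it is continuous, satisfies the Kirchhoff conditions at both vertices, is antisymmetric under $x\mapsto -x$, and has eigenvalue $(\pi/\ell)^2$. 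Meanwhile the symmetric sector corresponds to the quotient ``lollipop'' graph (a loop of length $\ell$ with a pendant edge of length $L/2$, Neumann at the free end), whose secular equation $2\tan(k\ell/2)+\tan(kL/2)=0$ has no root in $(0,\pi/\ell]$ when $L\le\ell$ (both terms are positive there), the remaining symmetric loop states sitting at $k=2m\pi/\ell$. Hence $\mu_1^a(G)=(\pi/\ell)^2<\mu_2^s(G)$ for every admissible $\ell,L$: the first nontrivial eigenfunction of your theta graph is \emph{always} antisymmetric, and no tuning of $L/\ell$ rescues this. (One can reverse the inequality by making the middle edge much longer than the outer ones, $L\gg\ell$, but that requires $e_3$ to be a long symmetric curve rather than the straight segment, and even then the gap is only of relative order $\ell/L$; it is not the construction you describe.)

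The backup route also rests on a false step: rotational antisymmetry $f(-x)=-f(x)$ forces $f(0)=0$ but does \emph{not} give vanishing trace on a symmetric curve $\gamma=-\gamma$ through the origin --- on the rectangle $(-a,a)\times(-b,b)$ the antisymmetric eigenfunction $\sin(\pi x_1/2a)$ is nonzero on the symmetric cut $\{x_2=0\}$. Obtaining the lower bound on $\mu_1^a$ is exactly the delicate point, and the paper handles it differently: it exploits the two \emph{reflection} symmetries of its specific domain to extract, from any hypothetical rotationally antisymmetric $\psi_2$, a possibly different eigenfunction vanishing on an entire coordinate axis (the alternative, that all eigenfunctions are reflection-symmetric in both axes, is excluded by a Courant nodal-domain count near the origin), and only then applies a mixed Dirichlet--Neumann bound on a half-domain. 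Finally, ``two small holes'' cannot work: Neumann eigenvalues in both symmetry sectors are stable under removing small holes, so the strict inequality $\mu_1^a<\mu_2^s$, which holds for the bulky simply connected region by Theorem~\ref{thm:simply-connected}, persists. The paper's counterexample instead makes the two holes macroscopic: a hub $B(0,r_1)$ and a tire $A(0,r_2,r_3)$ joined by two thin spokes, where a symmetric test function (constant on hub and tire, interpolating radially across the spokes) forces $\mu_2\to 0$ as the spokes thin while the relevant half-domain eigenvalues stay bounded away from zero.
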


The proofs of our statements 
will be deferred until Section~\ref{sec:proofs}. The positive result is a direct and easy topological consequence of the well-known impossibility of $\psi_2$ having an interior nodal domain; the proof of the negative result is more involved (cf.~also Remark~\ref{rem:dirichlet}). Our choice of $\Omega$ in Theorem~\ref{thm:counterexample} is inspired by the counterexample given in \cite{hoffmann:97}, but much simpler: imagine a wheel consisting of a hub, a tire, and exactly two spokes connecting them (see Figure~\ref{fig:counterexample}). In this case, it is better for one nodal domain to concentrate in the hub, with the other on the tire, than to split both down the middle. With rather more effort, it should be possible to show that Theorem~\ref{thm:simply-connected} continues to hold for any doubly connected planar domain, thus providing a complete answer to the question of the effect of topology on Principle~\ref{principle} in the plane; see Remark~\ref{rem:doubly-connected}. Let us formulate this question explicitly.

\begin{problem}
\label{problem:doubly-connected}
Suppose $\Omega$ satisfies Assumption~\ref{assumption} and is homeomorphic to an annulus. Prove that any eigenfunction associated with $\mu_2 (\Omega)$ is antisymmetric.
\end{problem}

We can also ask an analogue of the other major open problem related to the nodal line conjecture: to prove that the two nodal domains of $\varphi_2$ on a \emph{convex} domain in higher dimensions both touch the boundary (see, e.g., \cite{kennedy:13}).

\begin{problem}
\label{problem:convex}
Suppose $\Omega \subset \R^d$, $d \geq 3$, is convex and satisfies $x \in \Omega$ if and only if $-x \in \Omega$. Prove that any eigenfunction associated with $\mu_2 (\Omega)$ is antisymmetric.
\end{problem}

Our example from Theorem~\ref{thm:counterexample} should be easily generalisable to higher dimensions.

\begin{remark}[Other boundary conditions]
\label{rem:dirichlet}
One could equally ask when Principle~\ref{principle} holds in the case of Dirichlet boundary conditions, i.e.~for $\lambda_2 (\Omega)$, or indeed for the second eigenvalue of the Laplacian with Robin boundary conditions $\frac{\partial \psi}{\partial\nu} + \alpha \psi = 0$ on $\partial\Omega$, say, for a constant $\alpha>0$. Both lead to well-posed problems, and we expect the statements (although not the proofs) to be robust to this choice. The trade-off is that negative results become easier to obtain, and positive ones harder, in the Dirichlet case: the Neumann condition seems to offer a better balance. The counterexample of Theorem~\ref{thm:counterexample} should continue to work with an easier proof, although we will not go into details. On the other hand, we do not know of a proof of Theorem~\ref{thm:simply-connected} without an additional convexity assumption on $\Omega$; indeed, the assertion becomes equivalent to the nodal line conjecture for simply connected planar domains satisfying Assumption~\ref{assumption}.
\end{remark}


\section{Some Examples}
\label{sec:examples}

Let us give a couple of simple explicit examples that illustrate why one expects $\psi_2$ to be antisymmetric, and how this corresponds to the intuition behind the nodal line conjecture. Here and throughout we will write $x = (x_1,x_2)=(r,\theta) \in \mathbb{R}^2$ for a (nonzero) point in the plane.

\begin{example}
Let $\Omega = B(0,1)$, the ball of unit radius. As is well known, every non-constant Neumann eigenfunction may be chosen to have the form $\psi (x) = J_k (\sqrt{\mu}r)e^{ik\theta}$ for some $k \in \mathbb{Z}$, where $J_k$ is the Bessel function of the first kind of order $k$ and $\sqrt{\mu} = j_{k,m}'$, the $m$th zero of the derivative of $J_k$ for some $m \geq 1$. For the first few eigenfunctions (chosen to be real) we obtain the nodal patterns shown in Figure~\ref{fig:disk}, corresponding to $J_{\pm 1}(j_{1,1}'r)e^{\pm i\theta}$ for $\mu_2=\mu_3=(j_{1,1}')^2$, $J_{\pm 2}(j_{2,1}' r)e^{\pm 2i\theta}$ for $\mu_4 = \mu_5 = (j_{2,1}')^2$ and $J_0 (j_{0,2}' r)$ for $\mu_6=(j_{0,2}')^2$ (under the convention $j_{0,1}'=0$; see, e.g., \cite[Sec.~9.5]{abramowitz:72}).
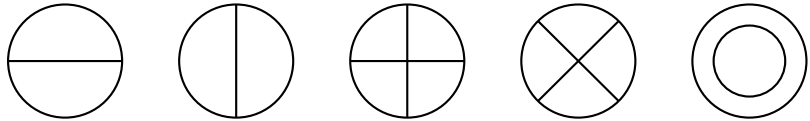
\begin{figure}[h]
\label{fig:disk}
\begin{center}
\begin{tikzpicture}[scale=1.5]
\draw[thick] (-3,0) circle [radius=0.5];
\draw[thick] (-3.5,0) -- (-2.5,0);
\draw[thick] (-1.5,0) circle [radius=0.5];
\draw[thick] (-1.5,0.5) -- (-1.5,-0.5);
\draw[thick] (0,0) circle [radius=0.5];
\draw[thick] (-0.5,0) -- (0.5,0);
\draw[thick] (0,0.5) -- (0,-0.5);
\draw[thick] (1.5,0) circle [radius=0.5];
\draw[thick] (1.5-0.35355,0+0.35355) -- (1.5+0.35355,0-0.35355);
\draw[thick] (1.5+0.35355,0+0.35355) -- (1.5-0.35355,0-0.35355);
\draw[thick] (3,0) circle [radius=0.5];
\draw[thick] (3,0) circle [radius=0.3138];
\end{tikzpicture}
\caption{Nodal patterns for canonically chosen real Neumann eigenfunctions of the disk for $\mu_2,\ldots,\mu_6$, respectively. In each case the nodal set is marked by the black line(s) through the disk.}
\end{center}
\end{figure}
We see that $\mu_2 = \mu_3$ have antisymmetric eigenfunctions, for any choice of basis elements, while $\mu_4 = \mu_5$ and $\mu_6$ have symmetric eigenfunctions. 
In fact only the one for $\mu_6$ has a nodal domain concentrated in the middle of $\Omega$, but the ones for $\mu_4$ and $\mu_5$ have four nodal domains. So when it comes to a comparison between all those eigenfunctions having two (the only ``candidates'' for $\psi_2$ in accordance with Courant's theorem) the first symmetric one corresponds~to~$\mu_6$.
\end{example}

\begin{example}
Fix numbers $a,b>0$ and let $\Omega = (-a,a) \times (-b,b)$, a rectangle centred at the origin. If $a>b$ then the first nontrivial eigenfunction is, up to scalar multiples,
\begin{displaymath}
	\psi_2(x_1,x_2)= \sin\left(\frac{\pi x_1}{2a}\right),
\end{displaymath}
which is in particular always antisymmetric. In the case of the square $a=b$ we obtain a space with multiplicity two, the function $\psi_3(x_1,x_2) = \sin(\pi x_2/2b)$ completing a basis. Then, again, every linear combination of $\psi_2$ and $\psi_3$ is antisymmetric. The numbering of the first eigenvalue with a symmetric eigenfunction, which has as a corresponding eigenfunction either $\psi(x) = \cos (\pi x_1/a)$ or $\sin(\pi x_1/2a)\sin(\pi x_2/2b)$,  depends on the ratio of $a$ to $b$: in particular, one may have arbitrarily many antisymmetric eigenfunctions before the first symmetric one.
\end{example}

\begin{remark}
\label{rem:symmetry}
The above example highlights why \emph{reflection} symmetry with respect to a fixed axis, for example, would not work in place of rotational symmetry: on the rectangle $\Omega = (-a,a) \times (-b,b)$ with $a>b$, $\psi_2$ is antisymmetric with respect to reflection in $\{x_2=0\}$ but symmetric in $\{x_1=0\}$. The rotational symmetry seems to circumvent this problem.
\end{remark}

\begin{example}
If $\Omega$ is a dumbbell (two disks or other similar domains symmetric to each other, connected by a thin ``handle'' through the origin), then the nodal line of $\psi_2$ will cut through the handle, in particular corresponding to an antisymmetric eigenfunction. This is an easy special case of Theorem~\ref{thm:simply-connected}. The same is true of long, thin domains.
\end{example}

\section{Proof of the statements}
\label{sec:proofs}

\begin{proof}[Proof of Theorem~\ref{thm:simply-connected}]
Suppose for a contradiction that $\psi_2$ is a symmetric eigenfuntion associated with $\mu_2$. Let $z,-z \in \partial\Omega$ be any two ``antipodal'' points on the boundary such that $\psi_2(z)=\psi_2(-z) >0$, say (if $\Omega$ is only Lipschitz, this may be understood in the sense of traces, recalling also that $\psi_2$ is still analytic inside $\Omega$). Noting that $\partial\Omega$ is connected, we distinguish between two cases: either $\psi_2>0$ on $\partial \Omega$ or $\psi_2$ changes sign on $\partial\Omega$.

\emph{Case 1:} $\psi_2\geq 0$ on $\partial\Omega$. In this case, the nodal domain $\Omega^-$ is entirely contained in $\Omega$: this leads to a contradiction via the standard argument $\mu_2 (\Omega) = \lambda_1 (\Omega^-) > \lambda_1 (\Omega) > \mu_2 (\Omega)$ using the characterisation of $\psi_2$ as the first Dirichlet eigenfunction on each of its nodal domains, the monotonicity of $\lambda_1$ with respect to domain inclusion, and a famous inequality originally due to P\'olya and Szeg\H{o}.

\emph{Case 2:} there exists some $y \in \partial \Omega$ such that $\psi_2(y)<0$. In this case, also $-y\in\partial\Omega$ and $\psi_2(-y)<0$. Note that $\partial\Omega \setminus \{z,-z\}$ consists of exactly two connected components, and $y$ and $-y$ lie in different components. In particular, there are at least four points on the boundary at which $\psi_2 = 0$, with $\psi_2$ changing sign in a neighbourhood of each. A simple topological argument based on the Jordan curve theorem shows that the nodal set of $\psi_2$ must now divide $\Omega$ into at least three nodal domains, a contradiction to Courant's theorem.
\end{proof}

\begin{remark}
\label{rem:doubly-connected}
If $\Omega$ is diffeomorphic to an annulus, then a similar topological argument, together with the fact that $\psi_2$ cannot have interior nodal domains, leads to the conclusion that, if $\psi_2$ is symmetric, then its nodal line must form a closed ring around the hole of $\Omega$, i.e., it divides $\Omega$ into an outer annulus (say, where $\psi_2>0$) and an inner annulus (where $\psi_2<0$). This is to be compared with the antisymmetric situation where the nodal line cuts $\Omega$ transversally, creating two half-doughnuts as nodal domains. Problem~\ref{problem:doubly-connected} consists in ruling out the former case.
\end{remark}

\begin{proof}[Proof of Theorem~\ref{thm:counterexample}]
We fix any numbers $0< r_1 < r_2 < r_3$ and set $B:=B(0,r_1) = \{x \in \R^2: |x|<r_1 \}$ and $A:= A(0,r_2,r_3) = \{x \in \R^2: r_2 < |x| < r_3 \}$.

For given $\varepsilon>0$ small, we form $\Omega_\varepsilon$ by uniting $B$, $A$ and two thin ``passages'' of angular width $\varepsilon$ along the $x_1$-axis: $\Omega_\varepsilon = A \cup B \cup U_\varepsilon^{+} \cup U_\varepsilon^{-}$, where
\begin{equation}
\label{eq:passages}
\begin{aligned}
	U_\varepsilon^+ &:= \{ (r,\theta) \in \R^2: r_1 \leq |r| \leq r_2,\, |\theta| < \varepsilon \},\\
	U_\varepsilon^- &:= \{ (r,\theta) \in \R^2: r_1 \leq |r| \leq r_2,\, |\theta-\pi| < \varepsilon \}.
\end{aligned}
\end{equation}
Then $\Omega_\varepsilon$ is a Lipschitz domain which satisfies Assumption~\ref{assumption}.
\begin{figure}[h]
\label{fig:counterexample}
\begin{center}
\begin{tikzpicture}[xscale=0.6,yscale=0.6]
\draw[thick] (0,0) circle [radius=3.4];
\draw[thick,domain=15:165] plot ({2.8*cos(\x)}, {2.8*sin(\x)});
\draw[thick,domain=195:345] plot ({2.8*cos(\x)}, {2.8*sin(\x)});
\draw[thick,domain=15:165] plot ({1.35*cos(\x)}, {1.35*sin(\x)});
\draw[thick,domain=195:345] plot ({1.35*cos(\x)}, {1.35*sin(\x)});
\draw[thick] (15:1.35) -- (15:2.8);
\draw[thick] (165:1.35) -- (165:2.8);
\draw[thick] (195:1.35) -- (195:2.8);
\draw[thick] (345:1.35) -- (345:2.8);
\draw[thick,dotted,domain=165:195] plot ({1.35*cos(\x)}, {1.35*sin(\x)});
\draw[thick,dotted,domain=165:195] plot ({2.8*cos(\x)}, {2.8*sin(\x)});
\draw[thick,dotted,domain=345:375] plot ({1.35*cos(\x)}, {1.35*sin(\x)});
\draw[thick,dotted,domain=345:375] plot ({2.8*cos(\x)}, {2.8*sin(\x)});
\fill (0,0) node {$B$};
\fill (-2,-0.07) node {$U_\varepsilon^-$};
\fill (2.05,-0.07) node {$U_\varepsilon^+$};
\fill (2.8,-2) node[anchor=west] {$A$};
\end{tikzpicture}
\caption{The domain $\Omega_\varepsilon = A \cup B \cup U_\varepsilon^+ \cup U_\varepsilon^-$.}
\end{center}
\end{figure}
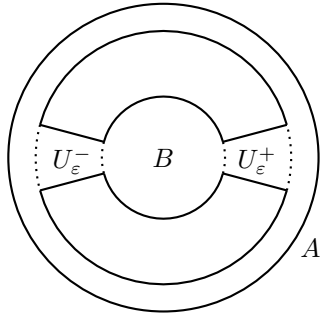
We claim that $\Omega_\varepsilon$ satisfies the claim of Theorem~\ref{thm:counterexample} for $\varepsilon>0$ small enough (which in practice may not be that small, but we will not attempt an explicit estimate). We divide the proof into the following three steps:

\emph{Step 1:} $\mu_2 (\Omega_\varepsilon) \to 0$ as $\varepsilon \to 0$.

\emph{Step 2:} If there is an antisymmetric eigenfunction associated with $\mu_2 (\Omega_\varepsilon)$, then there exists a (possibly different) eigenfunction $\psi_2$ whose nodal set contains either $\{x_1 = 0\} \cap \Omega_\varepsilon$ or $\{ x_2 = 0\} \cap \Omega_\varepsilon$. In particular, $\mu_2 (\Omega_\varepsilon) \geq \min \{\nu_1(\Omega_\varepsilon^{>}), \nu_1 (\Omega_\varepsilon^{\wedge}) \}$, where $\Omega_\varepsilon^{>} = \Omega_\varepsilon \cap \{x_1>0\}$ and $\Omega_\varepsilon^{\wedge} = \Omega_\varepsilon \cap \{x_2>0\}$, and $\nu_1$ is the first eigenvalue of the Laplacian with Dirichlet conditions on the relevant axis and Neumann conditions on the rest of the boundary. (See Figure~\ref{fig:omegaparts}.)\footnote{Actually, it is clear that we will have $\mu_2 (\Omega_\varepsilon) = \min \{\nu_1(\Omega_\varepsilon^{>}), \nu_1 (\Omega_\varepsilon^{\wedge}) \} = \nu_1(\Omega_\varepsilon^{>})$, and the nodal set of $\psi_2$ is exactly $\{ x_2 = 0\} \cap \Omega_\varepsilon$. But it appears to be more work to prove this than to deal with the additional case which comes from not proving it.}

\emph{Step 3:} $\min \{ \nu_1(\Omega_\varepsilon^{>}), \nu_1 (\Omega_\varepsilon^{\wedge}) \}$ is bounded from below away from $0$ as $\varepsilon \to 0$.\footnote{Although the statement of this step seems completely obvious, since we are dealing with a singular domain perturbation for the often delicate Neumann condition we include a proof.} Since $\mu_2 (\Omega_\varepsilon) \to 0$, this gives an immediate contradiction to the assumption that there was an antisymmetric eigenfunction associated with $\mu_2 (\Omega_\varepsilon)$ and thus proves the theorem.
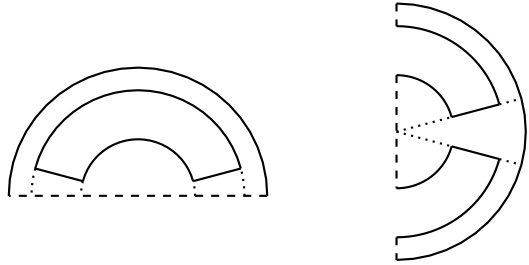
\begin{figure}[h]
\label{fig:omegaparts}
\begin{center}
\begin{tikzpicture}[xscale=0.5,yscale=0.5]
\draw[thick,domain=15:165] plot ({-3.4+2.8*cos(\x)},{2.8*sin(\x)});
\draw[thick,domain=15:165] plot ({-3.4+1.5*cos(\x)},{1.5*sin(\x)});
\draw[thick,dotted,domain=0:15] plot ({-3.4+2.8*cos(\x)},{2.8*sin(\x)});
\draw[thick,dotted,domain=0:15] plot ({-3.4+1.5*cos(\x)},{1.5*sin(\x)});
\draw[thick,dotted,domain=165:180] plot ({-3.4+2.8*cos(\x)},{2.8*sin(\x)});
\draw[thick,dotted,domain=165:180] plot ({-3.4+1.5*cos(\x)},{1.5*sin(\x)});
\draw[thick,domain=0:180] plot ({-3.4+3.4*cos(\x)},{3.4*sin(\x)});
\draw[thick,dashed] (0,0) -- (-6.8,0);
\begin{scope}[shift={(-3.4,0)}]
\draw[thick] (15:1.5) -- (15:2.8);
\draw[thick] (165:1.5) -- (165:2.8);
\end{scope}
\draw[thick,dashed] (3.4,5.1) -- (3.4,4.5);
\draw[thick,dashed] (3.4,3.2) -- (3.4,0.2);
\draw[thick,dashed] (3.4,-1.1) -- (3.4,-1.7);
\draw[thick,domain=270:450] plot ({3.4+3.4*cos(\x)},{1.7+3.4*sin(\x)});
\draw[thick,domain=270:345] plot ({3.4+2.8*cos(\x)},{1.7+2.8*sin(\x)});
\draw[thick,domain=15:90] plot ({3.4+2.8*cos(\x)},{1.7+2.8*sin(\x)});
\draw[thick,domain=270:345] plot ({3.4+1.5*cos(\x)},{1.7+1.5*sin(\x)});
\draw[thick,domain=15:90] plot ({3.4+1.5*cos(\x)},{1.7+1.5*sin(\x)});
\begin{scope}[shift={(3.4,1.7)}]
\draw[thick] (15:1.5) -- (15:2.8);
\draw[thick] (345:1.5) -- (345:2.8);
\draw[thick,dotted] (15:0) -- (15:1.5);
\draw[thick,dotted] (15:2.8) -- (15:3.4);
\draw[thick,dotted] (345:0) -- (345:1.5);
\draw[thick,dotted] (345:2.8) -- (345:3.4);
\end{scope}
\end{tikzpicture}
\caption{The domains $\Omega_\varepsilon^{\wedge}$ (left) and $\Omega_\varepsilon^{>}$ (right). The solid lines indicate Neumann boundary conditions, the dashed lines Dirichlet, and the dotted lines the additional Neumann conditions inserted in the proof of Step 3.}
\end{center}
\end{figure}

\emph{Proof of Step 1:} We construct a test function $\phi \in H^1(\Omega_\varepsilon)$ by setting
\begin{displaymath}
	\phi (x) = \begin{cases} -c_1 \qquad &\text{if } x \in B,\\ c_2 &\text{if } x \in A, \\
	-c_1 + \left(\frac{c_1+c_2}{r_2-r_1}\right)(r-r_1) \qquad &\text{if } 
	x = (r,\theta) \in U_\varepsilon^+ \cup U_\varepsilon^-,\end{cases}
\end{displaymath}
where $c_1 :=1/|B|$, $c_2 :=1/|A|$ are chosen so that $\int_{A \cup B} \phi = 0$, and the definition of $\phi$ on $U_\varepsilon^\pm$ says that $\phi$ interpolates linearly in the radial direction between $-c_1$ on $B$ and $c_2$ on $A$, meaning $\phi \in H^1 (\Omega_\varepsilon)$. We see that
\begin{displaymath}
	\int_{\Omega_\varepsilon} \phi^2 \geq \frac{1}{|A|}+\frac{1}{|B|},
\end{displaymath}
while since $c_1,c_2$ depend only on $r_1,r_2,r_3$ and $|U_\varepsilon^\pm| \sim \varepsilon$,
\begin{displaymath}
	\int_{\Omega_\varepsilon} \!|\nabla \phi|^2 = \int_{U_\varepsilon^+ \cup U_\varepsilon^-}\!
	\left| \frac{\partial\phi}{\partial r}\right|^2 \leq C(r_1,r_2,r_3)\varepsilon,\,
	\left|\int_{\Omega_\varepsilon}\phi\,\right| = \left|\int_{U_\varepsilon^+ \cup U_\varepsilon^-}\!\!\!\!\phi\,\right|
	\leq C(r_1,r_2,r_3)\varepsilon.
\end{displaymath}
It follows from the variational characterisation of $\mu_2 (\Omega_\varepsilon)$, using $\phi - \frac{1}{|\Omega_\varepsilon|} \int_{\Omega_\varepsilon} \phi$ as a test function, that (assuming without loss of generality that $|\Omega_\varepsilon|=1$)
\begin{displaymath}
	\mu_2 (\Omega_\varepsilon) \leq \frac{\int_{\Omega_\varepsilon} |\nabla \phi|^2}{\int_{\Omega_\varepsilon}
	\big(\phi - \int_{\Omega_\varepsilon} \phi \big)^2} =
	\frac{\int_{\Omega_\varepsilon} |\nabla \phi|^2}{\int_{\Omega_\varepsilon}\phi^2
	-\big(\int_{\Omega_\varepsilon}\phi\big)^2} \longrightarrow 0 \quad\text{as } \varepsilon \to 0.
\end{displaymath}

\emph{Proof of Step 2:} Since $\Omega_\varepsilon$ has reflection symmetry with respect to the axes $\{x_1=0\}$ and $\{x_2=0\}$, we may choose a (possibly different) basis of eigenfunctions 
for $\mu_2 (\Omega_\varepsilon)$ such that each is either symmetric or antisymmetric with respect to \emph{reflection} in each axis. We claim that there is at least one basis element which is antisymmetric in one axis: this will immediately imply the claim of the step.

Suppose not. Then every eigenfunction is symmetric in both axes (noting this property is preserved under taking linear combinations); in particular, this is true of our rotationally antisymmetric eigenfunction $\psi_2$, which satisfies $\psi_2(0)=0$. Since by the maximum principle $\psi_2$ cannot have an isolated zero, and indeed $\psi_2$ must change sign in every neighbourhood of every zero, there exists an open neighbourhood $U$ of $0$ such that $\{\psi_2>0\}$ and $\{\psi_2<0\}$ both have (at least) two connected components in $U$ (more precisely: on any circle $S_r = \{x: |x|=r\}$ for $r>0$ small enough, each will have at least two connected components at positive distance to each other). Since $\{\psi_2>0\}$ and $\{\psi_2<0\}$ are disjoint, open planar sets, it is impossible for them both to be connected. This contradicts Courant's theorem.

\emph{Proof of Step 3:} We first treat $\Omega_\varepsilon^{>}$. 
We decompose $\Omega_\varepsilon^{>}$ through the addition of Neumann conditions along $\{\theta = \pm \varepsilon \}$ as shown in Figure~\ref{fig:omegaparts} into two identical copies of a circular sector $S_\varepsilon$ of opening angle $\frac{\pi}{2}-\varepsilon$ and Dirichlet conditions on one side, two copies of a segment of annulus $A_\varepsilon$ of the same length of angle and Dirichlet conditions at one end, and a wedge $W_\varepsilon$ of opening angle $2\varepsilon$ and radial length $r_3$ and a Dirichlet condition at its vertex $0$ (and Neumann conditions elsewhere, in all cases). Then the variational characterisation immediately implies
\begin{displaymath}
	\nu_1 (\Omega_\varepsilon^{>}) \geq \min \{\nu_1 (A_\varepsilon), \nu_1 (S_\varepsilon), \nu_1 (W_\varepsilon) \},
\end{displaymath}
so it remains to bound the latter eigenvalues from below as $\varepsilon \to 0$. In fact we have
\begin{displaymath}
	\nu_1 (A_\varepsilon) \geq \nu_1 (A_0), \qquad \nu_1 (S_\varepsilon) \geq \nu_1 (S_0),
\end{displaymath}
where $A_0$ is the sector of angle $\frac{\pi}{2}$ and $S_0$ the corresponding quarter-annulus. These follow by a direct variational argument, since any test function on $A_\varepsilon$ or $S_\varepsilon$ which is zero on the Dirichlet boundary may be extended by zero across this boundary to obtain a valid test function on $A_0$ or $S_0$, respectively, with the same Rayleigh quotient.

Meanwhile, $\nu_1 (W_\varepsilon)$ is independent of $\varepsilon>0$; the corresponding eigenfunction, call it $\psi_\varepsilon$, depending only on the radial variable $r$ and not $\theta$ (this can be seen either by separating variables explicitly, or performing a ``symmetrisation'' by replacing $\psi_\varepsilon(r_0,\theta)$ by its mean value $\frac{1}{2\varepsilon}\int_{-\varepsilon}^\varepsilon \psi_\varepsilon (r_0,\theta)\,\textrm{d}\theta$ for each fixed $r_0>0$, which produces a new test function whose Rayleigh quotient cannot be larger than that of $\psi_\varepsilon$). 
Since $\nu_1 (W_\varepsilon)>0$, the eigenfunctions not being constant due to the condition $\psi_\varepsilon (0)=0$, this completes the proof for $\nu_1 (\Omega_\varepsilon^{>})$. 

We now treat $\nu_1 (\Omega_\varepsilon^{\wedge})$. In this case, we insert additional Neumann conditions along $\{(r,\theta): |r|=r_1,\, \theta \in (0,\varepsilon) \cup (\pi-\varepsilon,\pi) \}$ and $\{(r,\theta): |r|=r_2,\, \theta \in (0,\varepsilon) \cup (\pi-\varepsilon,\pi) \}$ to decompose $\Omega_\varepsilon^{\wedge}$ into the upper half disk $B^\wedge = B \cap \{x_2>0\}$, the upper half annulus $A^\wedge = A \cap \{x_2 > 0\}$ and two copies of the half passage $V_\varepsilon^+ = U_\varepsilon^+ \cap \{x_2 > 0\}$, cf.~Figure~\ref{fig:omegaparts}. Thus
\begin{displaymath}
	\nu_1 (\Omega_\varepsilon^{\wedge}) \geq \min \{ \nu_1 (A^\wedge), \nu_1 (B^\wedge), \nu_1 (V_\varepsilon^+) \},
\end{displaymath}
where, again, $\nu_1 (\,\cdot\,)$ is the first mixed Dirichlet-Neumann eigenvalue with Dirichlet conditions on $\{x_2=0\}$ and Neumann elsewhere. Then $\nu_1(A^\wedge),\nu_1(B^\wedge)>0$ are independent of $\varepsilon>0$, and a simple variational argument shows that $\nu_1 (V_\varepsilon^+)>0$ is increasing in $\varepsilon$: indeed, suppose $\varepsilon_1 < \varepsilon_2$. As for $A_\varepsilon$ and $S_\varepsilon$ above, by identifying $V_{\varepsilon_1}^+$ with a subset of $V_{\varepsilon_2}^+$ in the right way, we may extend any test function on the former by zero across its boundary to obtain (after rotation) a valid test function in $H^1(V_{\varepsilon_2}^+)$, still vanishing on the Dirichlet boundary of $V_{\varepsilon_2}^+$, with the same Rayleigh quotient. Thus the space of valid test functions on $V_{\varepsilon_1}^+$ may be identified with a subset of those on $V_{\varepsilon_2}^+$, immediately yielding $\nu_1 (V_{\varepsilon_1}^+) \geq \nu_1 (V_{\varepsilon_2}^+)$.

We conclude that $\liminf_{\varepsilon \to 0} \nu_1 (V_\varepsilon^+) = \lim_{\varepsilon \to 0}\nu_1 (V_\varepsilon^+) > 0$. This completes the proof of this step and hence the theorem.
\end{proof}

\bibliographystyle{amsplain}

\providecommand{\bysame}{\leavevmode\hbox to3em{\hrulefill}\thinspace}
\providecommand{\href}[2]{#2}

\end{document}